\documentclass[reqno]{amsart}

\usepackage{amssymb}

\usepackage{amsmath}

 \usepackage{amsthm,enumerate}
 
 \usepackage{tikz}

\newtheorem{definition}{Definition}
\newtheorem{theorem}{Theorem}
\newtheorem{lemma}{Lemma}
\newtheorem{prop}{Proposition}

\newcommand{\Z}{\mathbb{Z}}
\newcommand{\floor}[1]{\lfloor #1\rfloor}

\begin{document}

\title[Breaks of log-concavity in the independence polynomials of trees]{Multiple breaks of log-concavity in the independence polynomials of trees}

\author{C\'esar Bautista-Ramos} 
\email{cesar.bautista@correo.buap.mx}

\address{Facultad de Ciencias de la Computaci\'on, Benem\'erita Universidad Aut\'onoma de Pue\-bla, 14 sur y Avenida San Claudio, CC03-304, Ciudad Universitaria, Puebla, 72570, Puebla, Mexico}

\begin{abstract}
We construct infinite families of trees whose independence polynomials violate log-concavity at an arbitrary number of indices. This affirmatively answers a question of D. Galvin.
\end{abstract}

\keywords{Tree graph, Log-concave sequence, Independence polynomial, Broken log-concavity}

\subjclass[2020]{Primary 05C05, 05C31, 05A15, 30C15; Secondary  05C76, 11B83}

\maketitle

\section{Introduction}
\label{intro}
The conjecture that independence polynomials of trees are unimodal \cite{Alavi} was later strengthened to the more tractable conjecture that they are log-concave \cite{Levit}. Several authors have since disproven this \cite{Kadrawi, Kadrawi2,Galvin,Ramos}. At the time of this writing, all known counterexamples in the literature exhibited only a single break in log-concavity. This observation led Galvin \cite{Galvin} to ask whether multiple breaks are possible.

In this note, we answer that question affirmatively. We construct infinite families of trees whose independence polynomials break log-concavity at an arbitrary number of indices (Theorem~\ref{thm:main}).

\section{Definitions and Results}
We use standard definitions, notations, and concepts from the literature concerning independent sets in simple graphs and the log-concavity of sequences (see, for instance, \cite{Kadrawi, Galvin}). In particular, if $G$ is a graph and $v$ is a vertex of $G$, we use $N[v]$ to denote the closed neighborhood of $v$, while $G\setminus v$ and $G\setminus N[v]$ denote the corresponding graphs with those vertices deleted. We denote the independence polynomial of a graph $G$ in the indeterminate $x$ by $I(G)$. The degree of $I(G)$ is the independence number of the graph, denoted $\alpha(G)$. We also use the standard recursive formulas for computing the independence polynomial \cite{Arocha,Gutman}. For a simple graph $G$ and a vertex $v$ of $G$,
\begin{equation}
\label{eq:Aro}
I(G) = I(G \setminus v) + x \,I(G \setminus N[v]).
\end{equation}
If $G$ is the disjoint union of graphs $G_1$ and $G_2$, then
\begin{equation}
\label{eq:Aro1}
I(G) = I(G_1) I(G_2).
\end{equation}

In the following, we define the trees $TG_{m,t}$ that exhibit as many as 
$m$ violations of the log-concavity of their independence polynomials.

\begin{definition}
Let $m$ and $t$ be two nonnegative integers.
\begin{enumerate}[(i)]
    \item Let $P_m$ denote the path graph on $m$ vertices.
    \item Let $S_{2,m}$ denote the starlike graph with a root $w$ to which $m$ copies of $P_2$ are attached.
    \item The tree $T_{m,t}$ is defined as the tree with a root $v$ that has $m$ children, and each child has $t$ pendant $P_2$ paths.
    \item The tree $TG_{m,t}$ is obtained by taking $m$ disjoint copies of $T_{3,t}$, joining the root $v$ of each copy to a new root $v_0$, and adding one additional child to $v_0$ that has no further descendants.
\end{enumerate}
\end{definition}

The trees $T_{m,t}$ and $S_{2,t}$ were first defined by Galvin \cite{Galvin} in order to prove a conjecture of Kadrawi and Levit \cite{Kadrawi2}. Fig.~\ref{fig1} shows the tree $TG_{2,5}$, whose independence polynomial is non-log-concave, with violations at two indices. In what follows, we explain why.
\begin{figure}[bth]
\centering
\begin{tikzpicture}[
    % Define a style for the vertices (nodes)
    vertex/.style={circle, fill=black, minimum size=4pt, inner sep=0pt},
    % Scale the drawing to fit nicely
    scale=0.9,
    every node/.style={transform shape}
]
    % --- Main Structure of TG(2,5) ---

    % Define the central root v0
    % This is the root of the entire TG(2,5) graph.
    \node[vertex, label=above:$v_0$] (v0) at (0, -1) {};

    % Define the single leaf child of v0
    % As per the definition, v0 has one child with no further descendents.
    \node[vertex] (l0) at (0, -2) {};
    \draw (v0) -- (l0);

    % The graph TG(m,t) is built from 'm' copies of T(3,t).
    % For TG(2,5), we need m=2 copies.

    % --- First Copy of T(3,5) ---

    % Define the root of the first copy, v1, and connect it to v0
    % The x-coordinate has been changed from -6 to -4 to move it closer to the center.
    \node[vertex, label=above left:$v$] (v1) at (-4, -2.5) {};
    \draw (v0) -- (v1);

    % The tree T(3,t) has a root with 3 children.
    % We create these 3 children for v1 using a loop.
    % The x-coordinates have been shifted to follow v1's new position.
    \foreach \x/\i in {-6/1, -4/2, -2/3} {
        % Create the child node
        \node[vertex] (c1\i) at (\x, -4) {};
        \draw (v1) -- (c1\i);
        
        % Each child has 't' pendant paths P_2.
        % For T(3,5), t=5. This means each child has 5 leaf nodes.
        % We create these 5 leaves for each child using another loop.
        \foreach \lx in {-0.8, -0.4, 0, 0.4, 0.8} {
            \node[vertex] (leaf) at (\x+\lx, -5.5) {};
            \node[vertex] (leaf1) at (\x+\lx, -6.5) {};
            \draw (c1\i) -- (leaf);
            \draw (leaf) -- (leaf1);
        }
    }

    % --- Second Copy of T(3,5) ---

    % This part is symmetric to the first copy.
    % Define the root of the second copy, v2, and connect it to v0
    % The x-coordinate has been changed from 6 to 4 to move it closer to the center.
    \node[vertex, label=above right:$v$] (v2) at (4, -2.5) {};
    \draw (v0) -- (v2);

    % Create the 3 children for v2.
    % The x-coordinates have been shifted to follow v2's new position.
    \foreach \x/\i in {2/1, 4/2, 6/3} {
        % Create the child node
        \node[vertex] (c2\i) at (\x, -4) {};
        \draw (v2) -- (c2\i);
        
        % Create the 5 pendant leaves for each child.
        \foreach \lx in {-0.8, -0.4, 0, 0.4, 0.8} {
            \node[vertex] (leaf) at (\x+\lx, -5.5) {};
            \node[vertex] (leaf1) at (\x+\lx, -6.5) {};
            \draw (c2\i) -- (leaf);
            \draw (leaf) -- (leaf1);
        }
    }

\end{tikzpicture}
\caption{The tree $TG_{2,5}$, whose independence polynomial has degree 37 and  breaks log-concavity at indices 34 and 36. For $TG_{m,t}$, the parameter $m$ equals the number of log-concavity violations for sufficiently large $t$.}\label{fig1}
\end{figure}
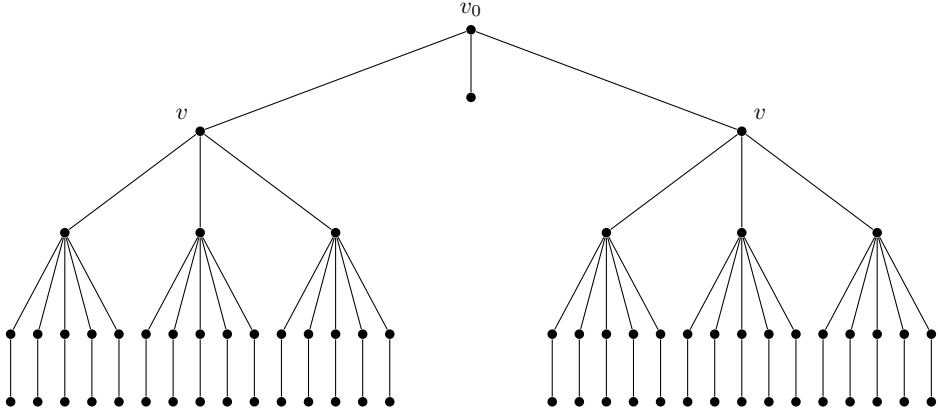

The equations $I(P_1) = 1+x$, $I(P_2) = 1+2x$, together with \eqref{eq:Aro} and \eqref{eq:Aro1}, yield the following.
\begin{lemma}\label{l:degs}
Let $m$ and $t$ be two nonnegative integers. Then,
\begin{equation}\label{eq:ST}
I(S_{2,t})=I(P_2)^t+xI(P_1)^t,\quad I(T_{m,t})=S_{2,t}^m+xP_2^{mt}
\end{equation}
and $\alpha(S_{2,t})=t+1$, $\alpha(T_{m,t})=m(t+1)$.
\end{lemma}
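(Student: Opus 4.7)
The plan is to apply the deletion recurrence \eqref{eq:Aro} at the distinguished root in each tree and then simplify using the disjoint-union rule \eqref{eq:Aro1}.

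For $I(S_{2,t})$, I would expand at the root $w$. Deleting $w$ disconnects the $t$ attached paths, leaving a disjoint union of $t$ copies of $P_2$, so $I(S_{2,t}\setminus w) = I(P_2)^t$ by \eqref{eq:Aro1}. Deleting the closed neighborhood $N[w]$ removes $w$ together with the inner endpoint of each attached $P_2$, leaving $t$ isolated leaves, so $I(S_{2,t}\setminus N[w]) = I(P_1)^t$. Substituting into \eqref{eq:Aro} yields the first identity.

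For $I(T_{m,t})$, I would similarly expand at the root $v$. Removing $v$ leaves $m$ subtrees, each consisting of one child together with its $t$ pendant $P_2$ paths, which is precisely a copy of $S_{2,t}$; hence $I(T_{m,t}\setminus v) = I(S_{2,t})^m$. Removing $N[v]$ strips off $v$ and all $m$ of its children, leaving the $mt$ pendant paths as isolated copies of $P_2$, so $I(T_{m,t}\setminus N[v]) = I(P_2)^{mt}$. Plugging into \eqref{eq:Aro} gives the second identity, with $S_{2,t}^m$ and $P_2^{mt}$ read as shorthand for $I(S_{2,t})^m$ and $I(P_2)^{mt}$.

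The independence numbers are then read off as degrees. Since $\deg I(P_1) = \deg I(P_2) = 1$, the two summands for $I(S_{2,t})$ have degrees $t$ and $t+1$, forcing $\alpha(S_{2,t}) = t+1$. Substituting this, the two summands for $I(T_{m,t})$ have degrees $m(t+1)$ and $mt+1$; for $m\geq 1$ the former strictly exceeds the latter, so $\alpha(T_{m,t}) = m(t+1)$. There is no real obstacle here: the argument is a direct unpacking of the definitions through the standard recurrence, and the only step that requires even a moment's care is correctly identifying the connected components after each vertex deletion.
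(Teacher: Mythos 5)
Your proof is correct and is exactly the argument the paper leaves implicit: the paper offers no written proof beyond the remark that $I(P_1)=1+x$, $I(P_2)=1+2x$, \eqref{eq:Aro}, and \eqref{eq:Aro1} yield the lemma, and your expansion at the roots $w$ and $v$ is the intended unpacking. One tiny slip: for $m=1$ the two summand degrees $m(t+1)$ and $mt+1$ are equal rather than strictly ordered, but since independence polynomials have nonnegative coefficients there is no cancellation and $\alpha(T_{m,t})=m(t+1)$ still follows.
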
	

The trees in \cite{Kadrawi,Kadrawi2,Ramos} have independence polynomials whose log-concavity is broken near the leading coefficient. A similar phenomenon occurs for our trees $TG_{m,t}$. To expose the relevant coefficients, it is convenient to consider the \emph{reflected} independence polynomials.

\begin{definition}
  Let $p \in \Z[x]$ be a polynomial. The \emph{reflected polynomial} of $p$, denoted by $Rp$, is defined as
  \[
  Rp(x) = x^n p(1/x),
  \]
  where $n = \deg p$.
\end{definition}

From the definition, it follows that the reflection operator $R$ is multiplicative: for any polynomials $p, q \in \Z[x]$, we have
\[
R(pq) = (Rp)(Rq).
\]
Additionally, $R$ satisfies an additive property under certain conditions: If $\deg(p+q) = \deg p$, then
\[
R(p+q) = Rp + x^{\deg p - \deg q} Rq.
\]

Applying \eqref{eq:Aro} to the root vertex $v_0$ of $TG_{m,t}$ yields
\begin{equation}\label{eq:ITG}
I(TG_{m,t}) = I(P_1) I(T_{3,t})^m + x I(S_{2,t})^{3m},
\end{equation}
which implies, together with Lemma~\ref{l:degs}, that $\alpha(TG_{m,t}) = 3(t+1)m + 1$. We now define the coefficients of the reflected independence polynomial of $TG_{m,t}$.

\begin{definition}
Let $c_k(m,t) \in \mathbb{Z}$ be such that 
\[
R I(TG_{m,t}) = \sum_{k=0}^{\infty} c_k(m,t) x^k.
\]
\end{definition}

We can compute the first few coefficients $c_k(m,t)$ as follows.
After applying the reflection operator to both sides of \eqref{eq:ITG} and using its additive and multiplicative properties together with Lemma~\ref{l:degs}, we obtain
\begin{equation}\label{eq:main}
R I(TG_{m,t}) = (x+1) R I(T_{3,t})^m + R I(S_{2,t})^{3m}.
\end{equation} 
 Furthermore, from Lemma~\ref{l:degs}, we use the equations
\begin{equation}\label{eq:RTRS}
R I(T_{3,t}) =R I(S_{2,t})^3 + x^{2} (x+2)^{3t},\quad
R I(S_{2,t}) = (x+1)^t + x(x+2)^t,
\end{equation}
where we expand $(x+2)^{t}$ and $(x+1)^t$ using the binomial theorem. However, we are only interested in the highest-order terms. To this end, we use the exact-order $\Theta$ asymptotic notation \cite[p. 87]{Brassard}, \cite[p. 110]{Knuth}.

\begin{definition}
Let $A= (a_k(t))_{k=0}^{\infty}$ and $B= (b_k(t))_{k=0}^\infty$ be sequences where each term is an eventually nonnegative function of the positive integer variable $t$. Let $S_A(x)$ and $S_B(x)$ be their respective generating functions with indeterminate $x$, and let $m$ be a fixed positive integer. We write $S_A(x) \asymp S_B(x) \pmod{x^m}$ if $a_k(t) \in \Theta(b_k(t))$ for all $0 \leq k \leq m-1$.
\end{definition}

The relation $\asymp$ is an equivalence relation because this property is inherited from the exact-order relation $\Theta$. Furthermore, $\asymp$ is consistent with the sum and product of generating functions in the following sense.

\begin{prop}\label{prop:0}
Let $m$ be a fixed positive integer. Let $A,B,C,D$ be four sequences of eventually nonnegative functions of the positive integer variable $t$. If 
\[S_A(x)\asymp S_C(x)\text{ and }S_B(x)\asymp S_D(x)\pmod{x^m}\]
then
\[
S_A(x)+S_B(x)\asymp S_C(x)+S_D(x)\text{ and }S_A(x)S_B(x)\asymp S_C(x)S_D(x)\pmod{x^m}.
\]
\end{prop}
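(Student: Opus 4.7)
The plan is to reduce Proposition~\ref{prop:0} to the two standard closure properties of the exact-order relation $\Theta$ for eventually nonnegative functions: if $f\in\Theta(g)$ and $h\in\Theta(k)$, then $f+h\in\Theta(g+k)$ and $fh\in\Theta(gk)$. Both properties are immediate from the two-sided bounding inequalities defining $\Theta$, provided the functions involved are eventually nonnegative. Denote the coefficients of $S_A,S_B,S_C,S_D$ by $a_k(t),b_k(t),c_k(t),d_k(t)$ respectively, so that the hypothesis reads $a_k(t)\in\Theta(c_k(t))$ and $b_k(t)\in\Theta(d_k(t))$ for each $0\le k\le m-1$.

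For the sum statement, I would simply observe that the coefficient of $x^k$ in $S_A(x)+S_B(x)$ is $a_k(t)+b_k(t)$, and likewise for $S_C(x)+S_D(x)$. Applying the additive closure of $\Theta$ pointwise in $k$ gives $a_k(t)+b_k(t)\in\Theta(c_k(t)+d_k(t))$ for every $0\le k\le m-1$, which is exactly $S_A+S_B\asymp S_C+S_D\pmod{x^m}$.

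For the product statement, the coefficient of $x^k$ in $S_A(x)S_B(x)$ is the finite convolution $\sum_{i=0}^{k}a_i(t)b_{k-i}(t)$, and similarly for the product of $S_C$ and $S_D$. When $k\le m-1$, each index $i$ and each $k-i$ also lies in $[0,m-1]$, so both hypothesized $\Theta$-relations apply termwise. Multiplicative closure then yields $a_i(t)b_{k-i}(t)\in\Theta(c_i(t)d_{k-i}(t))$ for every $i$, and a second application of additive closure to the resulting finite sum produces
\[
\sum_{i=0}^{k}a_i(t)b_{k-i}(t)\in\Theta\!\Bigl(\sum_{i=0}^{k}c_i(t)d_{k-i}(t)\Bigr),
\]
which gives $S_AS_B\asymp S_CS_D\pmod{x^m}$.

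The argument is essentially bookkeeping, so no step constitutes a real obstacle. The one subtlety worth flagging is that additive closure of $\Theta$ can fail once negative values are permitted (cancellations may make a sum asymptotically smaller than the sum of its parts), and this is precisely why the definition of $\asymp$ was set up to require each sequence to consist of eventually nonnegative terms. This hypothesis is used implicitly at every invocation of the closure properties and, once acknowledged, makes the whole proof routine.
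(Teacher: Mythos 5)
Your proof is correct: the coefficientwise reduction to the additive and multiplicative closure of $\Theta$ for eventually nonnegative functions, applied to the coefficients $a_k+b_k$ and to the finite convolutions $\sum_{i=0}^k a_i b_{k-i}$ (all of whose indices stay below $m$), is exactly the elementary argument the paper has in mind when it omits the proof. Your remark that eventual nonnegativity is what licenses the additive closure step is the right subtlety to flag and matches the hypothesis built into the paper's definition of $\asymp$.
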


The proof of this proposition is elementary, so we omit it. This proposition is used frequently in the proof of the following lemma.

\begin{lemma}\label{l:comp}
Let $m$ be a fixed positive integer and let $t$ be a positive integer variable. Then:
\begin{enumerate}[(i)]
\item\label{l:i} $(2+x)^t\asymp2^t(1+x)^t\pmod{x^m}$.
\item\label{l:i1} $(2+x)^t(1+x)\asymp (2+x)^t\pmod{x^m}$.
  \item \label{l:i2} $(1+2^tx)^m(1+x)\asymp (1+2^tx)^m\pmod{x^{m+1}}$.
  \item\label{l:ii} $RI(S_{2,t})\asymp 1+2^tx(1+x)^t\pmod{x^m}$.
    \item\label{l:ii1} $RI(S_{2,t})^3\asymp (1+2^tx)^{2}+x^3(2+x)^{3t} \pmod{x^m}$.
\item\label{l:iv} $RI(T_{3,t})\asymp (1+2^tx)^{2}+x^{2}(2+x)^{3t}\pmod{x^m}$.
\item\label{l:v} $RI(T_{3,t})^{m}(x+1)\asymp \sum_{k=0}^{\infty} 2^{(k+\floor{k/2})t}\pmod{x^{2m+1}}$.
\item\label{l:vi} $RI(S_{2,t})^{3m}\asymp \sum_{k=0}^{\infty}2^{kt}x^k\pmod{x^{2m+1}}$.
\end{enumerate}
\end{lemma}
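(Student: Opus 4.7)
The plan is to prove the eight parts in the stated order; each follows from the previous items, Lemma~\ref{l:degs}, the identities~\eqref{eq:RTRS}, and Proposition~\ref{prop:0}.

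Parts~(\ref{l:i})--(\ref{l:i2}) reduce to direct binomial comparisons. In~(\ref{l:i}), the coefficient of $x^k$ on the left is $\binom{t}{k}2^{t-k}$ and on the right is $2^t\binom{t}{k}$, both of order $\Theta(2^t t^k)$. In~(\ref{l:i1}) and~(\ref{l:i2}), multiplication by $(1+x)$ adds at each coefficient a term of strictly smaller order in $t$ than the one already present, so the $\Theta$-class is unchanged.

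For~(\ref{l:ii}), substitute $RI(S_{2,t})=(1+x)^t+x(2+x)^t$ from~\eqref{eq:RTRS}. By~(\ref{l:i}) and Proposition~\ref{prop:0}, $x(2+x)^t\asymp 2^tx(1+x)^t\pmod{x^m}$; the leftover $(1+x)^t$ contributes $\binom{t}{k}\in\Theta(t^k)$ to the coefficient of $x^k$, which is absorbed by the $\Theta(2^tt^{k-1})$ on the right for $k\geq 1$ and agrees at $k=0$. For~(\ref{l:ii1}), cube~(\ref{l:ii}) via Proposition~\ref{prop:0} and expand; use~(\ref{l:i}) to identify the leading term $2^{3t}x^3(1+x)^{3t}\asymp x^3(2+x)^{3t}$, while the three cross terms $1$, $3\cdot 2^tx(1+x)^t$, and $3\cdot 2^{2t}x^2(1+x)^{2t}$ have the same $\Theta$-orders at each $x^k$ as the corresponding coefficients of $(1+2^tx)^2=1+2\cdot 2^tx+2^{2t}x^2$. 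For~(\ref{l:iv}), plug~(\ref{l:ii1}) into the exact identity $RI(T_{3,t})=RI(S_{2,t})^3+x^2(2+x)^{3t}$ and apply~(\ref{l:i1}) to collapse $x^3(2+x)^{3t}+x^2(2+x)^{3t}=x^2(1+x)(2+x)^{3t}\asymp x^2(2+x)^{3t}$.

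For~(\ref{l:v}), raise~(\ref{l:iv}) to the $m$-th power via Proposition~\ref{prop:0} and expand by the binomial theorem as
\[
\sum_{j=0}^{m}\binom{m}{j}(1+2^tx)^{2(m-j)}\,x^{2j}(2+x)^{3jt}.
\]
Within the $j$-th summand, a term contributing to $x^k$ has the form $2^{ti+3jt}$ times a polynomial in $t$ of degree $r$, where $i+r+2j=k$ and $i\leq 2(m-j)$. For $k\leq 2m$, maximizing the exponent of $2$ forces $r=0$ and $i=k-2j$, giving order $2^{t(k+j)}$; maximizing over $j$ then gives $j=\floor{k/2}$ (which is $\leq m$) and order $2^{t(k+\floor{k/2})}$. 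Multiplication by $(x+1)$ only adds the shifted coefficient at $x^{k-1}$ to that at $x^k$, and the former has strictly smaller $\Theta$-order, so the $\Theta$-class is preserved. For~(\ref{l:vi}), raise~(\ref{l:ii}) to the $(3m)$-th power and expand as
\[
\sum_{j=0}^{3m}\binom{3m}{j}2^{jt}x^j(1+x)^{jt};
\]
for $k\leq 2m<3m$, the coefficient of $x^k$ is dominated by the $j=k$ term, of order $\Theta(2^{kt})$.

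The main obstacle is the book-keeping in~(\ref{l:v}): among the many triples $(j,i,r)$ producing the same power $x^k$, one must verify that concentrating the powers of $2^t$ inside the $(1+2^tx)$-factor (taking $i$ maximal and $r=0$) always outweighs spreading them across the $(2+x)^{3jt}$-factor, and then that the optimal $j$ is indeed $\floor{k/2}$ and not a smaller value.
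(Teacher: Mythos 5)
Your proof is correct and follows essentially the same route as the paper's: the same chain of reductions through the binomial theorem, the same absorption of lower-order $\Theta(t^k)$ and $(1+x)$ factors, and in \eqref{l:v} the same optimization showing the exponent $(k-r+j)t$ is maximized at $r=0$, $j=\floor{k/2}$. The only (immaterial) deviation is in \eqref{l:vi}, where you expand $RI(S_{2,t})^{3m}$ directly from \eqref{l:ii} rather than from the cube in \eqref{l:ii1}; both give $\Theta(2^{kt})$.
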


\begin{proof}
\begin{enumerate}[(i)]
\item
From the binomial theorem and using that $\binom{t}{k}\in\Theta(t^k)$ for $0\leq k\leq m-1$, the result follows.

\item Using \eqref{l:i} and  $(1+x)^{t+1}\asymp (1+x)^t\pmod{x^m}$, we obtain the result.
\item From the binomial theorem,
  \begin{align*}
    (1+2^tx)^m(1+x)&\asymp \sum_{j=0}^{m}2^{jt}x^j+\sum_{j=0}^m 2^{jt}x^{j+1}\\
    & \asymp 1+\sum_{j=1}^m(2^{jt}+2^{(j-1)t})x^j+2^{mt}x^{m+1}\\
    &\asymp (1+2^tx)^m\pmod{x^{m+1}}.
  \end{align*}
\item From \eqref{eq:RTRS} and \eqref{l:i} we obtain:
  \begin{align*}
    RI(S_{2,t})&\asymp (1+x)^t(1+2^tx)\asymp \sum_{k=0}^\infty t^kx^k+\sum_{k=0}^\infty 2^t t^k x^{k+1}\\               
               &\asymp 1+\sum_{k=1}^\infty (t^k+2^t t^{k-1})x^k\asymp 1+2^tx(1+x)^t\pmod{x^m}.
  \end{align*}

\item From \eqref{l:ii}, the binomial theorem, and \eqref{l:i}, we obtain:
\begin{align*}
RI(S_{2,t})^3&\asymp 1+x2^t(1+x)^t+x^2 2^{2t}(1+x)^{2t}+x^3 2^{3t}(1+x)^{3t}\\
&\asymp 1+\sum_{k=0}^\infty 2^tt^{k}x^{k+1}+\sum_{k=0}^\infty 2^{2t}t^{k}x^{k+2}+\sum_{k=0}^\infty 2^{3t}t^{k}x^{k+3}\\   
&\asymp 1+2^tx+(t2^{t}+2^{2t})x^2+\sum_{k=3}^\infty (2^{t}t^{k-1}+2^{2t}t^{k-2}+2^{3t}t^{k-3})x^k\\
&\asymp (1+2^tx)^{2}+x^3(2+x)^{3t} \pmod{x^m}.
\end{align*}

\item From \eqref{eq:RTRS}, \eqref{l:ii1}, and \eqref{l:i1}, we obtain:
\begin{align*}
RI(T_{3,t})&\asymp (1+2^tx)^{2}+x^3(2+x)^{3t}+x^{2}(2+x)^{3t}\\
&\asymp (1+2^tx)^{2}+x^{2}(2+x)^{3t}(1+x)\\
&\asymp (1+2^tx)^{2}+x^{2}(2+x)^{3t} \pmod{x^{2m+1}},
\end{align*}
where we work modulo \(x^{2m+1}\) in preparation for the proof of \eqref{l:v}.

\item By \eqref{l:iv} and  the binomial theorem,  we obtain
\begin{align*}
RI&(T_{3,t})^m (1+x)\asymp (1+2^tx)^{2m}(1+x)+\\
&\,\sum_{j=1}^m(1+2^tx)^{2(m-j)} x^{2j}(x+2)^{3jt}(1+x) \pmod{x^{2m+1}}.
\end{align*}
From \eqref{l:i2} and \eqref{l:i1}, we know that the factor $1+x$ can be ignored. Thus, using  \eqref{l:i}, we can write
\begin{align*}
RI(T_{3,t})^m (1+x)&\asymp \sum_{j=0}^m(1+2^tx)^{2(m-j)} x^{2j}(2+x)^{3jt}\\&
\asymp \left(\sum_{k=0}^\infty t^{k} x^k\right)Q \pmod{x^{2m+1}},\\
\end{align*}
where
\[
Q=\sum_{j=0}^m \sum_{k=0}^{2(m-j)}2^{(3j+k)t}x^{2j+k}.
\] In order to find the exact order of the coefficients of $Q$, we make the change of variables
\begin{equation}
  \label{eq:diof}
\ell=2j+k.  
\end{equation} 
We must determine the coefficients of each monomial $x^\ell$. Observe that $\ell + j = 3j + k$, and that the largest integer solution $j$ in \eqref{eq:diof} occurs when $j$ is the quotient of the integer division of $\ell$ by $2$ and $k$ is its remainder, i.e., when $j = \lfloor \ell/2 \rfloor$. Consequently,
\[
Q\asymp\sum_{\ell=0}^{2m}2^{(\ell+\floor{\ell/2})t}x^\ell\pmod{x^{2m+1}}.
\]

Hence, we have
\begin{align*}
RI(T_{3,t})^m (1+x)&\asymp \left(\sum_{k=0}^\infty t^k x^k\right)\left( \sum_{k=0}^\infty 2^{(k+\floor{k/2})t}x^k\right) \\
&\asymp \sum_{k=0}^\infty \sum_{j=0}^k t^{k-j}2^{(j+\floor{{j}/{2}})t}x^k\\
&\asymp \sum_{k=0}^{\infty} 2^{(k+\floor{k/2})t}x^k \pmod{x^{2m+1}}.
\end{align*}

\item From \eqref{l:ii1} and proceeding similarly to \eqref{l:v}, we obtain
\begin{align*}
RI(S_{2,t})^{3m}&\asymp \sum_{k=0}^{\infty}\sum_{j=0}^m 2^{(3j+k)t}x^{3j+k}\asymp\sum_{k=0}^\infty 2^{kt}x^k\pmod{x^{2m+1}}.
\end{align*}\end{enumerate}	
\end{proof}

\begin{lemma}\label{l:coeffTG} Let $m$ be a fixed positive integer and let $t$ be a positive integer variable. Then:
\[
RI(TG_{m,t})\asymp \sum_{k=0}^{\infty}2^{(k+\floor{k/2})t} x^k\pmod{x^{2m+1}}
\]
\end{lemma}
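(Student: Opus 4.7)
The identity \eqref{eq:main} expresses $RI(TG_{m,t})$ as the sum of two pieces that Lemma~\ref{l:comp} has just handed us in asymptotic form. So the plan is simply to plug in \eqref{l:v} and \eqref{l:vi}, add them termwise via Proposition~\ref{prop:0}, and check that the first sum swallows the second.

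\textbf{Main steps.} First I would write
\[
RI(TG_{m,t}) \;=\; (x+1)\,RI(T_{3,t})^m \;+\; RI(S_{2,t})^{3m}
\]
from \eqref{eq:main}. Applying Lemma~\ref{l:comp}\eqref{l:v} to the first summand and Lemma~\ref{l:comp}\eqref{l:vi} to the second, both valid modulo $x^{2m+1}$, and then invoking the additive compatibility in Proposition~\ref{prop:0}, I would obtain
\[
RI(TG_{m,t}) \;\asymp\; \sum_{k=0}^{\infty}\bigl(2^{(k+\floor{k/2})t} + 2^{kt}\bigr)\,x^k \pmod{x^{2m+1}}.
\]
Finally I would observe that for every $k\ge 0$ the exponent satisfies $k+\floor{k/2}\ge k$, so
\[
2^{(k+\floor{k/2})t} \;\le\; 2^{(k+\floor{k/2})t}+2^{kt} \;\le\; 2\cdot 2^{(k+\floor{k/2})t},
\]
which gives $2^{(k+\floor{k/2})t}+2^{kt}\in\Theta\bigl(2^{(k+\floor{k/2})t}\bigr)$. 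Hence the displayed sum is $\asymp \sum_{k=0}^{\infty} 2^{(k+\floor{k/2})t} x^k \pmod{x^{2m+1}}$, as required.

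\textbf{Expected difficulty.} There is essentially no obstacle here: all the combinatorial work was done in Lemma~\ref{l:comp}. The only nontrivial observation is that the $RI(S_{2,t})^{3m}$ contribution, whose coefficients grow like $2^{kt}$, is asymptotically dominated (or matched, when $k\in\{0,1\}$) by the contribution from $(x+1)RI(T_{3,t})^m$, whose coefficients grow like $2^{(k+\floor{k/2})t}$; this domination is exactly what justifies discarding the lower-order term in the $\Theta$ sense.
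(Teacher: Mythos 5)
Your proposal is correct and takes essentially the same route as the paper's own (one-line) proof: combine \eqref{eq:main} with Lemma~\ref{l:comp}\eqref{l:v} and Lemma~\ref{l:comp}\eqref{l:vi} via Proposition~\ref{prop:0}, with the absorption $2^{(k+\floor{k/2})t}+2^{kt}\in\Theta\bigl(2^{(k+\floor{k/2})t}\bigr)$. You merely make explicit the domination step that the paper leaves implicit.
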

\begin{proof} Lemma \ref{l:comp}\eqref{l:v}, Lemma \ref{l:comp}\eqref{l:vi}, \eqref{eq:main}, and Proposition~\ref{prop:0} yield the result.
\end{proof}

This analysis reveals how the log-concavity  of $I(TG_{m,t})$ is broken multiple times.

\begin{theorem}\label{thm:main}
  For each positive integer $m$ and for every sufficiently large integer $t$, the log-concavity of the independence polynomial of the tree $TG_{m,t}$ is broken at $m$ indices.
\end{theorem}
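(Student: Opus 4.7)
My plan is to reduce the log-concavity question for $I(TG_{m,t})$ to one about the reflected polynomial $RI(TG_{m,t})$, whose first $2m+1$ coefficients are pinned down in exact $\Theta$-order by Lemma~\ref{l:coeffTG}, and then perform a parity-based comparison of consecutive triples $(c_{k-1},c_k,c_{k+1})$.

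First, since reversing a finite sequence preserves log-concavity, $I(TG_{m,t})$ breaks log-concavity at index $j$ if and only if $RI(TG_{m,t})$ breaks at index $n-j$, where $n=\alpha(TG_{m,t})=3(t+1)m+1$. So the task becomes counting indices $k$ for which $c_{k-1}(m,t)\,c_{k+1}(m,t)>c_k(m,t)^2$. By Lemma~\ref{l:coeffTG}, $c_k(m,t)\in\Theta(2^{(k+\floor{k/2})t})$ for every $k$ with $0\le k\le 2m$. Hence for $1\le k\le 2m-1$, the products $c_{k-1}c_{k+1}$ and $c_k^2$ have exact orders $2^{(2k+\floor{(k-1)/2}+\floor{(k+1)/2})t}$ and $2^{(2k+2\floor{k/2})t}$, respectively.

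A short parity check shows the first exponent exceeds the second by exactly $t$ when $k$ is odd and falls short by exactly $t$ when $k$ is even: for $k=2j+1$ we get $j+(j+1)=2j+1$ versus $2j$, and for $k=2j$ we get $(j-1)+j=2j-1$ versus $2j$. Therefore, for every sufficiently large $t$, the inequality $c_{k-1}c_{k+1}>c_k^2$ holds at each odd $k\in\{1,3,\ldots,2m-1\}$ and fails at each even $k$ in the same range. This produces exactly $m$ broken indices within the controlled range.

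The step I expect to be the main obstacle is guaranteeing that no further breaks occur at indices $k\ge 2m+1$ of $RI(TG_{m,t})$, where Lemma~\ref{l:coeffTG} gives no direct control, since the theorem (and the caption of Fig.~\ref{fig1}) asserts that the count is \emph{exactly} $m$. I would address this by pushing the asymptotic machinery of \eqref{eq:main} and \eqref{eq:RTRS} deeper into the polynomial and arguing that on the remaining tail the coefficients inherit a log-concave pattern from the binomial-type factors $(x+1)^{tm}$ and $(x+2)^{3mt}$ that govern the bulk of $RI(TG_{m,t})$ there. With that supplementary estimate, combined with the parity computation above, the number of breaks is exactly $m$, as claimed.
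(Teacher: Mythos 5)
Your core argument is the same as the paper's: after passing to the reflected polynomial, Lemma~\ref{l:coeffTG} gives $c_k(m,t)\in\Theta(2^{(k+\floor{k/2})t})$ for $0\le k\le 2m$, and your parity computation showing $c_{k-1}c_{k+1}/c_k^2\in\Theta(2^{t})$ at each odd $k\in\{1,3,\dots,2m-1\}$ is exactly the paper's comparison of the triples $(c_{2j},c_{2j+1},c_{2j+2})$, yielding $m$ breaks for all sufficiently large $t$. The extra step you flag as the main obstacle---ruling out further breaks at indices beyond the controlled range so that the count is \emph{exactly} $m$---is not needed for (and not carried out in) the paper's proof, which only exhibits $m$ breaking indices; your supplementary estimate for that stronger claim remains a sketch, but the theorem as proved does not depend on it.
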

\begin{proof}
Let $0 \leq k \leq 2(m-1)$. According to Lemma~\ref{l:coeffTG}, $c_k(m,t)\in\Theta(2^{u_kt})$, where $u_k=k+\floor{k/2}$. Let us assume that $k$ is even and write $k=2j$. Then, $u_k = 3j$, $u_{k+1} = 3j + 1$, $u_{k+2}=3(j+1)$, and 
\[
c_k(m,t) c_{k+2}(m,t) \in \Theta(2^{(6j+3)t}), \quad c_{k+1}^2(m,t) \in \Theta(2^{(6j+2)t}),
\]
which implies
\[
c_k(m,t)c_{k+2}(m,t) - c_{k+1}^2(m,t) \in \Theta(2^{(6j+3)t}).
\]
\end{proof}

 For example, straightforward calculations using \eqref{eq:ITG} and \eqref{eq:ST} show that the graphs $TG_{4,6}$ and $TG_{5,6}$ break log-concavity exactly at indices $84, 82, 80, 78$ and $105, 103,$ $101, 99, 97$, respectively.

\bibliographystyle{amsplain} 
\bibliography{Bib}

\end{document}